\newtheorem{theorem}{Theorem}[section]
\newtheorem{prop}[theorem]{Proposition}
\newtheorem{cor}[theorem]{Corollary}
\theoremstyle{definition}
\newtheorem{definition}[theorem]{Definition}
\DeclareMathOperator{\Ort}{O}
\DeclareMathOperator{\SO}{SO}
\DeclareMathOperator{\PO}{PO}
\DeclareMathOperator{\PU}{PU}
\def\isom{\operatorname{Isom}(\mathbb{H}^n)}
\def\isomp{\operatorname{Isom}^+(\mathbb{H}^n)}
\def\Hy{\mathbb{H}}
\def\OO{\mathcal{O}}
\def\HG{\mathbb{H}^n/\Gamma}
\def\disc{\mathrm{disc}}
\def\e_HW{\epsilon_{HW}}
\def\Z{{\mathbb Z}}
\def\Q{{\mathbb Q}}
\def\R{{\mathbb R}}
\def\S{{\mathbb S}}
\def\vol{{\rm Vol}}
\def\cO{{\mathcal O}_k}
\def\D{\mathcal{D}}
\begin{document}
\title{Finiteness theorems for congruence reflection groups}

\date{\today}
\subjclass{20F55 (primary); 11F06, 11H56, 22E40 (secondary)}

\author{Mikhail Belolipetsky}
\thanks{Belolipetsky partially supported by EPSRC grant EP/F022662/1}
\address{
Department of Mathematical Sciences\\
Durham University\\
South Rd\\
Durham DH1 3LE\\
UK\\
and Institute of Mathematics\\
Koptyuga 4\\
630090 Novosibirsk\\
RUSSIA}
\email{mikhail.belolipetsky@durham.ac.uk}

\begin{abstract}
This paper is a follow-up to our joint paper with I.~Agol, P.~Storm and K.~Whyte ``Finiteness of arithmetic hyperbolic reflection groups''. The main purpose is to investigate the effective side of the method developed there and its possible application to the problem of classification of arithmetic hyperbolic reflection groups.
\end{abstract}

\maketitle

\section{Introduction}

A \emph{hyperbolic reflection group} $\Gamma$ is a discrete subgroup of the group of isometries of the hyperbolic $n$-space $\isom$ generated by reflections in the faces of a hyperbolic polyhedron $P\subset \Hy^n$. If $P$ has finite volume, then $\Hy^n/\Gamma = \OO$ is a finite volume hyperbolic orbifold, which is obtained by ``mirroring'' the faces of $P$. A reflection group $\Gamma$ is \emph{maximal} if there does not exits a reflection group  $\Gamma'\subset\isom$ that properly contains $\Gamma$.

In~\cite{ABSW} we showed that there exist only finitely many conjugacy classes of arithmetic maximal hyperbolic reflection groups, this was also proved independently by Nikulin in~\cite{N07}. The methods of both papers are effective but fall short of providing good explicit bounds, which could be potentially used for classification of arithmetic reflection groups. The latter is an important problem, which has connections with many branches of mathematics. In order to push the ideas in the practical direction, in this paper we introduce an extra assumption of arithmetic nature: We will  consider only \emph{congruence reflection groups}. A precise definition of this term is given in Section~\ref{sec:prelim}, for now let us mention only that we do not know any examples of arithmetic maximal reflection groups which would be non-congruence. In Section~\ref{sec:disc1} we will present a more detailed discussion of the related facts.

With the congruence assumption at hand we were able to give a short and uniform proof of the previously known finiteness results with the numerical bounds coming close to the range of the currently known examples. The precise statements of the main results are given in Theorems~\ref{thm1}, \ref{thm2} and Propositions~\ref{prop:dim}, \ref{prop:fields}. These results can be compared with the general bounds obtained by Vinberg in~\cite{vinb2, vinb3} for the dimension and by Nikulin, who wrote a series of papers on classification of the possible fields of definition of arithmetic hyperbolic reflection groups (see~\cite{N10} and the references therein). For example, by Proposition~\ref{prop:fields}, for dimension $n\ge 6$, the degree $d$ of the field of definition of the congruence reflection groups is bounded by $4$ while the corresponding latest bound of Nikulin is $d \le 25$. In dimensions $4$ and $5$, Nikulin's method gives $d\le 44$ while our bound is only $6$. Let us note that the actual examples in these dimensions are known only for $d = 1$ and $2$. Our bounds for the highest dimension in which congruence reflection groups may exist are $12$ and $27$ in the cocompact and non-cocompact cases, respectively, which can be compared with the corresponding $14$ and $30$ due to Vinberg~\cite{vinb2} (which hold without extra assumptions). Moreover, our method allows to produce a feasible list of quadratic forms in higher dimensions, which may give rise to congruence reflection subgroups. These forms can later be examined more carefully using Vinberg's algorithm~\cite{vinb1}. Even if the congruence assumption is not true for all arithmetic maximal reflection subgroups, the results obtained this way might be useful for the classification.

Comparing with~\cite{ABSW}, the main new ingredient of the present paper comes from the persistent use of the sharp lower bounds for the volumes of arithmetic quotients of $\Hy^n$. These bounds are obtained in~\cite{B1, E, BE} and will be discussed in detail in Section~\ref{sec:prelim}. The technical part of this section will be used in Section~\ref{sec:qaunt} but can be safely skipped on the first reading. The main results of the paper are presented in Sections~\ref{sec:fin_thms} and~\ref{sec:qaunt}. In Section~\ref{sec:discussion} we give a short overview of some related work and discuss the questions that remain open.

\medskip

\noindent {\it Acknowledgement.} I would like to thank Ian Agol, Daniel Allcock, Colin Maclachlan and Ernest Borisovich Vinberg for their interest to this work and helpful correspondence.

\section{Arithmetic subgroups, congruence subgroups and bounds for covolumes} \label{sec:prelim}

The group of isometries $H = \isom$ is isomorphic to $\PO(n,1)$, the projective orthogonal group of signature $(n,1)$. It can be identified with the matrix group $\Ort_0(n,1)$, the subgroup of the orthogonal group which preserves the upper halfspace. The group of orientation preserving isometries $\isomp$ is isomorphic to the identity component $\PO(n,1)^\circ$. It is often more convenient to deal with the (connected) Lie group of the orientation preserving isometries and relate it to the general case using the fact that an arbitrary subgroup of $\isom$ contains a subgroup of index at most $2$ which acts preserving orientation.

In a way similar to \cite{ABSW}, we can define arithmetic and congruence subgroups of $\isom$ of the simplest type, i.e. defined by quadratic forms. A result of Vinberg~\cite[Lemma~7]{vinb0} implies that arithmetic reflection groups are always defined by quadratic forms, so we will not require other constructions here.

\def\ia{{\mathfrak a}}

\begin{definition}
Let $k\subset\R$ be a totally real number field with a ring of integers $\cO$, and let $f$ be a quadratic
form of signature $(n,1)$ defined over $k$ such that for every non-identity embedding
$\sigma :k\to\R$ the form $f^\sigma$ is positive definite. Then the group
$\Gamma = \Ort_0(f,\cO)$ of the integral automorphisms of $f$ is a discrete
subgroup of $H$ via the inertia theorem. Such groups $\Gamma$ and subgroups of $H$ which are
commensurable to them are called {\it arithmetic subgroups of the simplest
type}. The field $k$ is called a {\it field of definition} of $\Gamma$ (and of
subgroups commensurable to $\Gamma$ in $H$). An arithmetic subgroup $\Gamma$ is called a {\it
congruence subgroup} if there is a nonzero ideal $\ia\subset\cO$ such that
$\Gamma\supset \Ort_0(f,\ia)$, where
$$\Ort_0(f,\ia) = \{ g\in\Ort_0(f,\cO) \mid g\equiv \mathrm{Id}\;(\mathrm{mod}\;\ia)\},$$
the {\it principal congruence} subgroup of $\Ort_0(f,\cO)$ of level $\ia$.
\end{definition}

We refer to~\cite{PR} for more information about arithmetic subgroups.
We will also apply the terminology from the definition to the corresponding
quotient orbifolds $\Hy^n/\Gamma$.

We are interested in lower bounds for covolumes of arithmetic (reflection) subgroups of
$\PO(n,1)$. This brings us to a rich topic going back to the work of Hurwitz and Klein.
In particular, we can cite here the results of Siegel~\cite{Siegel} for $n=2$, Chinburg--Friedman
\cite{CF} for $n=3$, Belolipetsky~\cite{B1} for even $n\ge4$, and, most recent,
Emery~\cite{E} for odd $n\ge5$ (cf. also Belolipetsky--Emery~\cite{BE}). Although our methods apply to
all hyperbolic dimensions, our primary interest will be in dimensions greater than $4$. We will thus
review the results on covolumes of arithmetic subgroups for these dimensions referring to the above
cited papers for $n = 2$ and $3$.

\begin{theorem}~\cite{B1} \label{thm:vol_even}
For every even dimension $n = 2r \ge 4$, covolumes of cocompact (resp. non-cocompact) arithmetic subgroups of $\PO(n,1)$ are bounded below by $\omega_{c}(n)$ (resp. $\omega_{nc}(n)$) given by:

\begin{enumerate}[(1)]
\item if $r$ is even:
\begin{equation*}
\omega_{c}(n)  =  \frac{2 \cdot 5^{r^2+r/2} \cdot (2\pi)^r}{(2r-1)!!}\prod_{i=1}^{r}\frac{(2i-1)!^2}{(2\pi)^{4i}}\zeta_{k_0}(2i);
\end{equation*}

\item if $r$ odd:
\begin{equation*}
\omega_{c}(n)  =  \frac{(4^r-1) \cdot 5^{r^2+r/2} \cdot (2\pi)^r}{(2r-1)!!} \prod_{i=1}^{r}\frac{(2i-1)!^2}{(2\pi)^{4i}}\zeta_{k_0}(2i);
\end{equation*}

\item if $r\equiv 0,\;1 \;{\mathrm (mod\ 4)}$:
\begin{equation*}
\omega_{nc}(n)  =  \frac{2 \cdot (2\pi)^r}{(2r-1)!!} \prod_{i=1}^{r}\frac{(2i-1)!}{(2\pi)^{2i}}\zeta(2i);
\end{equation*}

\item if $r\equiv 2,\;3 \;{\mathrm (mod\ 4)}$:
\begin{equation*}
\omega_{nc}(n)  =  \frac{(2^r-1) \cdot (2\pi)^r}{(2r-1)!!} \prod_{i=1}^{r}\frac{(2i-1)!}{(2\pi)^{2i}}\zeta(2i);
\end{equation*}
\end{enumerate}
where $k_0 = \Q[\sqrt{5}]$, $\zeta$ denotes the Riemann zeta function, and $\zeta_{k_0}$ is the Dedekind zeta function
of $k_0$.
\end{theorem}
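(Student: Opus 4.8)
The plan is to derive the explicit lower bounds $\omega_c(n)$ and $\omega_{nc}(n)$ from Prasad's volume formula, which expresses the covolume of a principal arithmetic subgroup of a simply connected semisimple group over a number field in terms of an Euler product of local factors, a power of the discriminant of the field of definition, and a product of values of $L$-functions attached to the relevant Galois representation. For $\PO(2r,1)$ the relevant simply connected group is $\mathrm{Spin}(f)$, a form of $\mathrm{Spin}_{2r+1}$, which is an inner form of type $\mathrm{B}_r$; here Prasad's formula specializes to a product of Dedekind zeta values $\zeta_{k_0}(2i)$ for $i=1,\dots,r$ (the exponents $2i$ being the degrees $d_i$ of the invariant polynomials of $\mathrm{B}_r$ minus one doubled), together with the Tamagawa-number normalization $\tau=2$ and the discriminant factor. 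The case split according to the residue of $r$ modulo $4$ reflects the behavior of the relevant Galois character (or the precise structure of $\mathrm{H}^1$ governing the class of maximal arithmetic subgroups, i.e.\ the factor $[\Lambda_{P'}:\Lambda_P]$ or the Tits-index corrections), and accounts for the factors $(4^r-1)$, $(2^r-1)$, etc.

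First I would fix the form $f$ of signature $(2r,1)$ over $k$ so that all non-identity Galois conjugates are definite, and pass from $\Ort_0(f,\cO)$ to the commensurable class of the corresponding $k$-group $G=\mathrm{Spin}(f)$; I would reduce to bounding the covolume of a \emph{maximal} arithmetic subgroup, since any arithmetic subgroup contains a principal congruence subgroup and the covolume only grows under passage to subgroups. Next I would invoke Prasad's formula to write $\vol(\G(k)\backslash \G(\A_k)/\text{(max.\ compact)})$ as $D_k^{\dim G/2}\cdot \bigl(D_{\ell}/D_k^{[\ell:k]}\bigr)^{s/2}\cdot \prod_{i=1}^{r}\frac{(2i-1)!}{(2\pi)^{2i}}\cdot \zeta_k(2i)\cdot \prod_v \lambda_v$, and then bound each ingredient from below: the Euler product $\prod_v\lambda_v$ of local (Tits/Bruhat) indices is bounded below by an explicit constant arising from the worst-case ramified places (this is where the factors like $(4^r-1)$ and $(5^{r^2+r/2})$ enter, using that in the cocompact case $k\ne\Q$ forces $D_k$ large, with the extremal real quadratic field $\Q(\sqrt5)$ giving the smallest discriminant $5$), the zeta-values $\zeta_k(2i)\ge 1$ or are bounded via Euler products and functional equations over $k_0=\Q(\sqrt5)$ in the cocompact case (and over $\Q$ in the non-cocompact case, where $k=\Q$ is forced since the form must be isotropic over $k$), and the discriminant factors are handled by Minkowski/Odlyzko-type bounds. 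Finally I would reconcile the normalization: translate the adelic volume into the Riemannian volume of $\Hy^{2r}/\Gamma$ using the known volume of the compact dual symmetric space $S^{2r}$ (the factor $\frac{2\cdot(2\pi)^r}{(2r-1)!!}$ is exactly, up to the sign subtlety, the Euler-characteristic/Gauss-Bonnet normalization for even-dimensional hyperbolic orbifolds), and account for the index between $\mathrm{Spin}(f,\cO)$ and $\Ort_0(f,\cO)$.

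The hard part will be twofold: making the lower bound on the product of local factors $\prod_v\lambda_v$ genuinely sharp rather than merely finite — this requires identifying, over all admissible $(k,f)$, the configuration of local behaviors that minimizes the product, and showing the Hasse-principle/genus-theory constraints on which signatures and which ramification patterns can simultaneously occur (the parity conditions on $r$ are exactly the combinatorial shadow of this optimization); and controlling the field-discriminant and zeta contributions simultaneously, since shrinking $D_k$ (which one wants, to make the bound an honest minimum) is in tension with the constraint that $f$ be admissible and, in the cocompact case, that $k\ne\Q$. I expect the optimization over ramified dyadic places, where the local index computations for type $\mathrm{B}_r$ are most delicate, to be the main technical obstacle; everything else is an assembly of Prasad's formula with standard analytic number theory and the Gauss–Bonnet normalization.
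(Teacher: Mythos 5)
Your plan follows essentially the same route as the paper's source for this theorem: the paper does not reprove it but cites \cite{B1}, where the bounds are derived exactly as you outline --- Prasad's volume formula for the simply connected group $\mathrm{Spin}(f)$ of type $\mathrm{B}_r$, reduction to maximal arithmetic subgroups in the Borel--Prasad framework with index bounds for the normalizer of a principal arithmetic subgroup, lower bounds on the local $\lambda$-factors with the Hasse-invariant parity constraints producing the factors $(2^r-1)$, $(4^r-1)$ and the case split in $r$, identification of $\Q[\sqrt{5}]$ (resp.\ $\Q$) as the extremal field in the cocompact (resp.\ non-cocompact) case, and the Euler-characteristic/Gauss--Bonnet normalization. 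The one slip worth noting is that the Tamagawa number of the simply connected $\mathrm{Spin}$ group is $1$, not $2$ (the factors of $2$ in $\omega_c$, $\omega_{nc}$ arise from the index and normalization bookkeeping), but this does not affect the strategy, which matches the cited proof.
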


\begin{theorem}~\cite{E, BE} \label{thm:vol_odd}
For every odd dimension $n = 2r-1 \ge 5$, covolumes of cocompact (resp. non-cocompact) arithmetic subgroups of $\PO(n,1)$ are bounded below by $\omega_{c}(n)$ (resp. $\omega_{nc}(n)$) given by:
\begin{enumerate}[(1)]
\item
\begin{equation*}
\omega_{c}(n) = \frac{5^{r^2-r/2}
     \cdot 11^{r-1/2} \cdot (r-1)!}{2^{2r} \pi^r} \; L_{\ell_0|k_0}\!(r) \; \prod_{i=1}^{r-1} \frac{(2i -1)!^2}{(2
     \pi)^{4i}} \zeta_{k_0}(2i);
\end{equation*}

\item if $r$ is even:
\begin{eqnarray*}
\omega_{nc}(n) &=&  \frac{3^{r-1/2}}{2^{r}} \; L_{\ell_1|\Q}\!(r) \; \prod_{i=1}^{r-1} \frac{(2i -1)!}{(2 \pi)^{2i}} \zeta(2i);
\end{eqnarray*}

\item if $r \equiv 1 \;{\mathrm (mod\ 4)}$:
\begin{eqnarray*}
\omega_{nc}(n) &=&  \frac{1}{2^{r-1}} \; \zeta(r) \; \prod_{i=1}^{r-1} \frac{(2i -1)!}{(2\pi)^{2i}} \zeta(2i);
\end{eqnarray*}

\item if $r \equiv 3 \;{\mathrm (mod\ 4)}$:
\begin{eqnarray*}
\omega_{nc}(n)  &=&  \frac{(2^r  -1)  (2^{r-1}-1)}{3 \cdot 2^{r}}   \;  \zeta(r)   \;  \prod_{i=1}^{r-1}
  \frac{(2i -1)!}{(2 \pi)^{2i}} \zeta(2i);
\end{eqnarray*}
\end{enumerate}
where $k_0 = \Q[\sqrt{5}]$, $\ell_0$ is the quartic field with a defining polynomial $x^4-x^3+2x-1$, $\ell_1 = \Q[\sqrt{-3}]$,
$\zeta$ and $\zeta_{k_0}$ denote the Riemann and Dedekind zeta functions, and $L_{\ell|k}$ is the Dirichlet
$L$-function.
\end{theorem}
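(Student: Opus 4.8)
The plan is to derive the four bounds from Prasad's volume formula together with the Bruhat--Tits and Borel--Prasad description of maximal arithmetic subgroups. For odd $n = 2r-1$, $\PO(n,1)$ is the group of real points of an adjoint group whose complexification has type $\Dn_r$, so every arithmetic subgroup is commensurable with a principal arithmetic subgroup $\Lambda$ attached to an admissible $k$-form $\mathbf{G}$ of $\Dn_r$ — either $\SO$ of a quadratic form over a totally real field $k$, or $\mathrm{SU}$ of a skew-Hermitian form over a quaternion algebra over $k$ — together with a coherent collection of parahoric subgroups. First I would reduce to \emph{maximal} arithmetic subgroups $\Gamma$: by Borel--Prasad each such $\Gamma$ is the normalizer of a principal $\Lambda$ for the simply connected cover $\mathrm{Spin}$, and the index $[\Gamma:\Lambda]$ is bounded above in terms of the class number of $k$ and the orders of explicit local Galois cohomology groups, so it suffices to bound $\vol(\Hy^n/\Lambda)$ below and divide by this controlled index. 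The even-dimensional bounds of Theorem~\ref{thm:vol_even} are obtained in the same way, with $\Dn_r$ replaced by $\Bn_r$, which has no outer forms and hence no $L$-factor.

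Next I would write Prasad's formula for $\vol(\Hy^n/\Lambda)$ explicitly for type $\Dn_r$. It is a product of: (i) a discriminant factor $D_k^{\dim\mathbf{G}/2}\,(D_\ell/D_k^{2})^{s}$, with $\dim\mathbf{G} = r(2r-1)$ and $s$ a fixed exponent depending only on the type, $\ell$ being the quadratic extension of $k$ splitting the outer form (trivial in the inner case); (ii) an archimedean factor built from the exponents $1,3,\dots,2r-3$ of $\Dn_r$, contributing $\bigl(\prod_{i=1}^{r-1}\frac{(2i-1)!}{(2\pi)^{2i}}\bigr)^{[k:\Q]}$, together with a factor from the remaining exponent $r-1$; (iii) the special values $\prod_{i=1}^{r-1}\zeta_k(2i)$ and, from the exponent $r-1$, $\zeta_k(r)$ in the inner case or $L_{\ell|k}(r)$ in the outer case; and (iv) a finite Euler product $\mathcal{E} = \prod_v \lambda_v$ in which every $\lambda_v \ge 1$. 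Applying the functional equations of $\zeta_k$ and $L_{\ell|k}$ rewrites $D_k^{\dim\mathbf{G}/2}\prod\zeta_k(2i)$ in the shape $\prod_i \frac{(2i-1)!^{[k:\Q]}}{(2\pi)^{2i[k:\Q]}}\zeta_k(2i)$ seen in the statement — the exponent $[k:\Q]$ being $2$ for $k = k_0$ and $1$ for $k = \Q$ — and likewise absorbs $D_\ell$ into the $L$-value. A final geometric step — the ratio between Prasad's normalization of Haar measure and the hyperbolic volume form, computed via the volume of the compact form and Gauss--Bonnet — supplies the remaining explicit powers of $2$ and $\pi$.

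It then remains to optimize over all admissible data. In the non-cocompact case one takes $k = \Q$ (a $\Q$-form of signature $(2r-1,1)$ in $2r \ge 6$ variables is isotropic by Meyer's theorem), and the parity of $r$ dictates the structure: when $r$ is even the signed discriminant of such a form is negative, so $\SO$ is necessarily an outer form, and the minimum is realized by the outer form split by the imaginary quadratic field of least discriminant, $\ell_1 = \Q[\sqrt{-3}]$, giving $L_{\ell_1|\Q}(r)$ and the factor $3^{r-1/2}$; when $r$ is odd an inner form is available, giving a Riemann zeta value $\zeta(r)$, but for $r \equiv 3 \pmod 4$ a Hasse-invariant obstruction forces a local ramification producing the rational factor $(2^r-1)(2^{r-1}-1)/3$, whereas for $r \equiv 1 \pmod 4$ no such factor appears. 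In the cocompact case Meyer's theorem forbids $k = \Q$, so $k$ is totally real of degree $\ge 2$; since $D_k^{\dim\mathbf{G}/2}$ grows at least exponentially in $[k:\Q]$ by the Minkowski and Odlyzko root-discriminant bounds, the minimum is forced to the smallest discriminant $k = k_0 = \Q[\sqrt5]$ with $D_{k_0} = 5$, whence the factor $5^{r^2-r/2} = 5^{\dim\mathbf{G}/2}$; similarly the splitting field of the relevant outer form is forced to be the quartic field $\ell_0$ with defining polynomial $x^4-x^3+2x-1$, the quadratic extension of $k_0$ of minimal discriminant compatible with the signature, contributing the factor $11^{r-1/2}$. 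Bounding $\zeta_k(2i) > 1$, $L_{\ell|k}(r)$ below by an explicit constant via its Euler product, and $\mathcal{E}$ below by the contributions at the primes forced to ramify, then reproduces the four displayed closed forms.

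The hard part is precisely this last optimization. It requires effective lower bounds for root discriminants sharp enough to exclude every totally real field of degree $\ge 3$ (where Odlyzko's and Poitou's estimates, and the known minimal discriminants in small degree, enter), a careful Bruhat--Tits computation of each local factor $\lambda_v$ — especially at the primes above $2$ and at the ramification of $\ell/k$ — to show that no choice of parahorics can undercut the naive one, control of the Borel--Prasad index $[\Gamma:\Lambda]$, which brings in the class numbers and the cohomology of the centre responsible for the rational prefactors, and a separate verification that the quaternionic forms of type $\Dn_r$ never give a smaller covolume. Balancing all of these against one another, separately for each parity class of $r$, is the real content of the theorem.
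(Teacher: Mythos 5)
The paper does not prove this theorem itself --- it quotes it from Emery's thesis and Belolipetsky--Emery \cite{E, BE} --- and your outline follows essentially the same route as those sources: Prasad's volume formula for the admissible forms of type $\mathrm{D}_r$, Borel--Prasad control of the index of a principal arithmetic subgroup in its maximal normalizer, lower bounds on the local $\lambda$-factors, and elimination of larger degrees and discriminants via Odlyzko-type bounds, with the minima landing exactly where you place them: $k_0=\Q(\sqrt{5})$, the quartic field $\ell_0$ of discriminant $275$ (whence $11^{r-1/2}$), $\ell_1=\Q(\sqrt{-3})$, and the $\lambda_2$-factor $(2^r-1)(2^{r-1}-1)/6$ accounting for the case $r\equiv 3\ (\mathrm{mod}\ 4)$. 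The only point your sketch passes over is that for $n=7$ the trialitarian forms of $\mathrm{D}_4$ must also be ruled out, alongside the quaternionic forms you do mention; this is handled in the cited work.
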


\begin{cor} (see~\cite[Section~4.5 and Addendum~1.5]{B1} and~\cite[Section~10]{BE}) \label{cor:vol}
For large enough $n$, the values of $\omega_{c}(n)$ and $\omega_{nc}(n)$ grow super-exponentially with $n$.
\end{cor}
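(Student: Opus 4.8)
The statement asserts that $\omega_{c}(n)$ and $\omega_{nc}(n)$ eventually exceed $C^{n}$ for every fixed $C>1$, equivalently that $\log\omega_{c}(n)/n\to\infty$ and $\log\omega_{nc}(n)/n\to\infty$ as $n\to\infty$. Since $n$ and $r$ are comparable ($n=2r$ in the even case, $n=2r-1$ in the odd case), it suffices to prove the stronger bound $\log\omega_{\bullet}(n)\ge c\,n^{2}\log n$ for some $c>0$ and all large $n$. The plan is to write each of the formulas of Theorems~\ref{thm:vol_even} and~\ref{thm:vol_odd} as a product $A(n)\,B(n)\,Z(n)$, where $Z(n)$ collects the zeta and $L$-values, $B(n)=\prod_{i}(2i-1)!^{\,e}/(2\pi)^{c_{i}}$ with $e\in\{1,2\}$ is the factorial product, and $A(n)$ is the remaining explicit prefactor (the powers of $5$, $11$, $3$, $2$, the terms $(4^{r}-1)$, $(2^{r}-1)$, the powers of $2\pi$ in front, and $(2r-1)!!$ or $(r-1)!$), and then to show that $B(n)$ alone already forces super-exponential growth while $A(n)$ and $Z(n)$ contribute only lower-order corrections to the logarithm.

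First I would squeeze $Z(n)$ between absolute positive constants. For a totally real field $k$ of degree $d$ the Euler product gives $1\le\zeta_{k}(s)\le\zeta(s)^{d}$ for all $s\ge2$; with $k=k_{0}=\Q[\sqrt5]$ ($d=2$) and the convergence of $\sum_{i\ge1}\log\zeta(2i)$, the products $\prod_{i=1}^{r}\zeta_{k_{0}}(2i)$ and $\prod_{i=1}^{r}\zeta(2i)$ stay between $1$ and a fixed constant. The remaining special values $L_{\ell_{0}|k_{0}}(r)$, $L_{\ell_{1}|\Q}(r)$, $\zeta(r)$ are each either a ratio $\zeta_{\ell}(r)/\zeta_{k}(r)$ for one of the fixed extensions $\ell/k$ occurring, or the value $\zeta(r)$ itself, and both numerator and denominator lie in $[1,\zeta(2)^{[\ell:\Q]}]$ for $r\ge2$; hence these too are squeezed between positive constants. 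Therefore $\log Z(n)=O(1)$.

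Next, consider $A(n)$. Every factor of $A(n)$ is either $\ge1$ — the powers of $5$, $11$, $3$, $2$, the terms $(4^{r}-1)$, $(2^{r}-1)$, $(2\pi)^{r}$, $(r-1)!$ — in which case it only helps (and in the cocompact cases $A(n)$ even contains the super-exponential factor $5^{\,r^{2}\pm r/2}$), or a denominator of size $(2\pi)^{O(r^{2})}$ or $(2r-1)!!$, and Stirling gives $\log(2r-1)!!=O(r\log r)$; so in all cases $\lvert\log A(n)\rvert=O(r^{2})$. Finally, Stirling applied to the factorial product yields
\begin{equation*}
\log\prod_{i=1}^{r}(2i-1)! = \sum_{i=1}^{r}\log\Gamma(2i) = \sum_{i=1}^{r}\Bigl((2i-\tfrac12)\log(2i)-2i+O(1)\Bigr) \ge c_{1}\,r^{2}\log r
\end{equation*}
for some $c_{1}>0$ and all large $r$, while the powers of $2\pi$ dividing $B(n)$ subtract only $O(r^{2})$ from the logarithm. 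Combining the three estimates, $\log\omega_{\bullet}(n)\ge c_{1}r^{2}\log r-O(r^{2})\ge c\,n^{2}\log n$ for all large $n$, as desired.

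I expect the only genuinely delicate point to be verifying that the factorial numerator $\prod(2i-1)!^{\,e}$ is not swamped by the accompanying denominators $\prod(2\pi)^{c_{i}}$, nor — in the non-cocompact formulas — by the extra factors $1/(2r-1)!!$ and $1/2^{r}$; this is exactly what the crude comparison $e^{\Omega(r^{2}\log r)}\gg e^{O(r^{2})}$ settles. The rest is routine bookkeeping over the finitely many residue classes of $r$ listed in the two theorems, together with the standard convergence of $\prod_{i\ge1}\zeta(2i)$.
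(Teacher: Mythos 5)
Your argument is correct and follows essentially the same route as the paper, which proves this corollary by the asymptotic analysis (carried out in the cited sections of \cite{B1} and \cite{BE}) of the explicit formulas of Theorems~\ref{thm:vol_even} and~\ref{thm:vol_odd}: the factorial products contribute $e^{\Theta(r^2\log r)}$, while the zeta/$L$-factors are bounded between positive constants and the remaining prefactors affect the logarithm only by $O(r^2)$.
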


The bounds in Theorems~\ref{thm:vol_even} and~\ref{thm:vol_odd} can be evaluated numerically using software such as {\tt PARI/GP} calculator. This was done in~\cite{B1} and~\cite{BE} for small values of $n$. While using these results one has to keep in mind that in the above cited papers we were looking at the groups of orientation-preserving isometries, so in order to get the general bounds one has to divide the values given there by $2$. Moreover, in~\cite{B1} the measure was normalised so that covolume of a lattice is equal to its Euler-Poincar\'e characteristic. In order to relate it to the  hyperbolic volume one has to multiply by the half-volume of the unite sphere $\S^n$ in $\R^{n+1}$:
$$\vol(\Hy^{2r}/\Gamma) = \frac{(2\pi)^r}{(2r-1)!!} \cdot |\chi(\Hy^{2r}/\Gamma)|.$$

The method of the proof of Theorems~\ref{thm:vol_even} and~\ref{thm:vol_odd} allows us to extract additional information about the covolume of an arithmetic lattice defined by a quadratic form provided arithmetic invariants of the form are fixed. The following proposition presents some results of this kind. The reader can find more information on covolumes of arithmetic lattices defined by quadratic forms in~\cite{BG} and~\cite{GHY}.

In order to state the proposition we need to recall some notations. By the Hasse-Minkow\-s\-ky theorem quadratic forms over number fields are classified by their structure over the local completions of the fields. The structure of a quadratic form $f$ over a non-archimedean local field is determined by the \emph{discriminant} $\disc(f)$ and the \emph{Hasse symbol} $\e_HW(f)$. We refer to O'Meara's monograph~\cite{OM} for the theory of quadratic forms over fields. We will assume that the Hasse symbol is normalized so that the split quadratic form $h$ always has $\e_HW(h) = 1$.

Given a number field $k$, we will denote by $k_v$ its completion with respect to a non-archimedean place $v$, $\mathcal{O}_v$ --- the ring of integers of $k_v$, and $q_v$ --- the order of the corresponding residue field. The absolute value of the discriminant of $k$ will be denoted by $\D_k$. For a quadratic form $f$ defined over $k$ we denote by $\ell$ the splitting field of $f$, it is either equal to $k$ or is its quadratic extension. As before, $\zeta$ denotes the Riemann zeta function, $\zeta_k$ is the Dedekind zeta function of $k$, and $L_{\ell|k}$ is the Dirichlet $L$-function attached to the extension $\ell/k$ (we assume $L_{\ell|k} = \zeta_k$ if $\ell = k$).

\begin{prop} \label{prop:vol}
Let $\Gamma$ be an arithmetic subgroup of $\PO(n,1)$, $n\ge 4$ defined by a quadratic form $f$ over a number field $k$. The covolume of $\Gamma$ is bonded below by a function $\nu(n,k,f)$ given by:
\begin{enumerate}[(1)]
\item if $n=2r$ is even:
\begin{equation*}
\nu(n,k,f)  =  \frac1{i_f}\cdot C_1(n,k) \cdot \prod_{i=1}^{r} \zeta_{k}(2i) \cdot \prod_{v\in T} \lambda_v,
\end{equation*}
where
\begin{enumerate}[(i)]
\item $\displaystyle C_1(n,k) = \D_k^{r^2+r/2}\frac{2\cdot (2\pi)^r}{(2r-1)!!} \cdot \prod_{i=1}^{r}\left(\frac{(2i-1)!}{(2\pi)^{2i}}\right)^{[k:\Q]},$
\item $\displaystyle \lambda_v \ge \frac{q_v^r-1}2,$
\item $T$ consists of the non-archimedean places $v$ of $k$ such that over $k_v$, $\disc(f)\not\in\mathcal{O}_v^*$ or $\disc(f)\in\mathcal{O}_v^*$ and $\e_HW = -1$, and
\item $i_f$ is the index of a principal arithmetic subgroup of minimal covolume attached to $f$ in its normalizer which can be bounded as in~\cite[Section~3.5]{B1};
\end{enumerate}

\item if $n=2r-1$ is odd:
\begin{equation*}
\nu(n,k,f)  =  C_2(n,k) \cdot \left(\frac{\D_\ell}{\D_k^{[\ell:k]}}\right)^{\frac{2r-1}2} \cdot L_{\ell|k}\!(r) \cdot \prod_{i=1}^{r-1}\zeta_k(2i) \cdot \prod_{v\in T} \lambda_v,
\end{equation*}
where
\begin{enumerate}[(i)]
\item $\displaystyle C_2(n,k) = \D_k^{\frac{2r^2-r}2} \cdot \left(\frac{(r-1)!}{(2\pi)^{r}} \cdot \prod_{i=1}^{r-1} \frac{(2i-1)!}{(2\pi)^{2i}}\right)^{[k:\Q]}\cdot \frac{4\pi^r}{(r-1)!},$
\item $\displaystyle \lambda_v \ge \frac{(q_v^r-1)(q_v^{r-1}-1)}{2(q_v+1)},$
\item $T$ consists of the non-archimedean places $v$ of $k$ such that over $k_v$, $\disc(f)\not\in (k_v^*)^2$, $\ell$ is unramified over $k_v$ and $\e_HW = -1$ or $\disc(f)\in (k_v^*)^2$ and $\e_HW = -1$, and
\item $i_f$ satisfies bounds for $[\Gamma:\Lambda]$ from~\cite[Proposition~4.12]{BE}.
\end{enumerate}
\end{enumerate}
\end{prop}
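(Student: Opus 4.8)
The proof will proceed along the lines of Theorems~\ref{thm:vol_even} and~\ref{thm:vol_odd}, namely via Prasad's volume formula together with a place-by-place analysis of the parahoric factors, the new twist being that we retain, rather than discard, the contributions at the places where $f$ fails to be split of the right type. First I would reduce to a principal arithmetic subgroup. Any arithmetic $\Gamma\subset\PO(n,1)$ defined by $f$ is contained in a maximal arithmetic subgroup, so $\vol(\Gamma)$ is at least the minimal covolume of a maximal arithmetic subgroup in the commensurability class of $\Gamma$; since that class is determined by the similarity class of $f$, this minimal covolume is, by \cite[Section~3.5]{B1} for even $n$ and \cite[Proposition~4.12]{BE} for odd $n$, at least $\vol(\Lambda)/i_f$, where $\Lambda$ is the principal arithmetic subgroup of minimal covolume attached to $f$ and $i_f$ bounds the index of $\Lambda$ in its normalizer. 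Hence it suffices to evaluate $\vol(\Lambda)$.

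Second I would apply Prasad's volume formula to $\Lambda$. The relevant $k$-group $\G$ is an inner or outer form of $\mathrm{Spin}(n+1)$: of type $\Bn_r$ when $n=2r$, and of type $\Dn_r$ when $n=2r-1$ --- inner if $\disc(f)$ is a square in $k$, otherwise outer with splitting field $\ell=k(\sqrt{\disc f})$. Prasad's formula writes $\vol(\Lambda)$ as a product of: (a) the discriminant power $\D_k^{\frac12\dim\G}$, equal to $\D_k^{r^2+r/2}$ for $\Bn_r$ and $\D_k^{r^2-r/2}$ for $\Dn_r$, since $\dim\SO(2r+1)=r(2r+1)$ and $\dim\SO(2r)=r(2r-1)$; (b) in the odd outer case, a power of $\D_\ell/\D_k^{[\ell:k]}$ with exponent $\frac{2r-1}2$, dictated by Prasad's $\mathfrak{s}$-invariant; (c) the archimedean factor $\left(\prod_i m_i!/(2\pi)^{m_i+1}\right)^{[k:\Q]}$ over the exponents $m_i$ of $\G$, which are $1,3,\dots,2r-1$ for $\Bn_r$ and $1,3,\dots,2r-3$ together with $r-1$ for $\Dn_r$ --- this is exactly the bracketed product in $C_1(n,k)$ and $C_2(n,k)$; (d) the Tamagawa number of $\G$ and the constant relating Prasad's invariant measure to the hyperbolic volume, which together yield $\frac{2(2\pi)^r}{(2r-1)!!}$ (the volume of $\S^{2r}$, the compact dual of $\Hy^{2r}$) in the even case and $\frac{4\pi^r}{(r-1)!}$ in the odd case, the powers of $2$ also accounting for the passage between $\mathrm{Spin}$, $\SO$ and $\PO(n,1)$ (as in \cite{B1, BE}); and (e) a product $\prod_v\beta_v$ over the finite places of local parahoric volume factors.

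Third I would unpack (e). At a non-archimedean place $v$ the form $f/k_v$ is determined by $\disc(f)$ and $\e_HW(f)$, and from these invariants (with the chosen normalization of the Hasse symbol) one checks that $\G$ is quasi-split and unramified over $k_v$ --- so that $P_v$ may be taken hyperspecial and $\beta_v=\prod_{i=1}^r(1-q_v^{-2i})^{-1}$ in the even case, resp. $\beta_v=(1-\chi_\ell(v)q_v^{-r})^{-1}\prod_{i=1}^{r-1}(1-q_v^{-2i})^{-1}$ in the odd case --- precisely when $v\notin T$, which is the description of $T$ in item (iii). The product of these $\beta_v$ over all $v\notin T$ equals, up to the finitely many Euler factors removed at the places of $T$, the quantity $\prod_{i=1}^r\zeta_k(2i)$ in the even case and $L_{\ell|k}(r)\prod_{i=1}^{r-1}\zeta_k(2i)$ in the odd case. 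Writing $\lambda_v$ for the full contribution of a place $v\in T$, i.e.\ $\beta_v$ with these Euler factors restored, we obtain $\vol(\Lambda)=C_1(n,k)\prod_{i=1}^r\zeta_k(2i)\prod_{v\in T}\lambda_v$ in the even case and the analogous expression in the odd case, whence $\vol(\Gamma)\ge\vol(\Lambda)/i_f=\nu(n,k,f)$.

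The remaining point, and the principal obstacle, is to establish the auxiliary estimates in (ii) and (iv) --- chiefly the uniform lower bounds $\lambda_v\ge\frac{q_v^r-1}2$ (even) and $\lambda_v\ge\frac{(q_v^r-1)(q_v^{r-1}-1)}{2(q_v+1)}$ (odd) for $v\in T$. For each such $v$ there are only finitely many possibilities for $f/k_v$, recorded by $(\disc,\e_HW)$ and by whether $\ell$ ramifies over $k_v$, and each leads to a specific non-quasi-split or ramified group over $k_v$; for every possibility one identifies, through Bruhat--Tits theory, the special maximal parahoric of largest volume used by $\Lambda$, reduces it modulo $v$ to a finite group of Lie type over the residue field, reads off $\beta_v$ as the corresponding order ratio, and verifies the stated inequality case by case. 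This analysis, along with the bound on $i_f$, is the substance of \cite[Section~3.5]{B1} for even $n$ and of the corresponding estimates in \cite{BE} for odd $n$, and is where the detailed quadratic-form and building-theoretic bookkeeping lies.
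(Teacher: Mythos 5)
Your proposal is correct and follows essentially the same route as the paper, whose proof of this proposition is simply a citation to \cite{B1} and \cite{BE}: those papers carry out exactly the reduction to a principal arithmetic subgroup, Prasad's volume formula with the parahoric/$\lambda$-factor analysis, and the index bound that you outline. The only minor slip is bibliographic: in the even case the $\lambda$-factor estimates come from \cite[Section~3.2]{B1}, while Section~3.5 of that paper supplies the bound on the index $i_f$.
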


\begin{proof}
Part (1) follows from~\cite{B1}, where we use~\cite[Section~3.2]{B1} to bound the $\lambda$-factors. Part (2) is a corollary of the results from~\cite{BE}. Both of these papers use methods and results of the work of Borel and Prasad~\cite{P, BP}.
\end{proof}

\section{Finiteness theorems} \label{sec:fin_thms}

Following~\cite{agol} and~\cite{ABSW}, let us recall an important inequality which relates the spectral gap and the volume of a hyperbolic $n$-orbifold. This inequality goes back to the work of Szeg\"{o}, Hersch, Li--Yau, and others (see~\cite{ABSW} and the references therein). Its generalization to $n$-dimensional orbifolds was developed in~\cite{ABSW}:
\begin{equation}\label{ly}
\lambda_1(\OO)\cdot\vol(\OO)^\frac2n \le n\cdot\vol_c(\OO)^\frac2n.
\end{equation}
Here $\lambda_1(\OO)$ denotes the first non-zero eigenvalue of the Laplacian on $\OO$ (also known as the \emph{spectral gap}), $\vol$ is the hyperbolic volume and $\vol_c$ is the conformal volume of $\OO=\Hy^n/\Gamma$. The latter was introduced by Li and Yau in~\cite{LY} and generalized to the orbifolds in~\cite{agol, ABSW}. We refer to~\cite[Section~2]{ABSW} for the definition and basic properties of the conformal volume, the only fact, except inequality (\ref{ly}), which we are going to use in this paper is that for a reflection group $\Gamma$, $\vol_c(\Hy^n/\Gamma) = \vol(\S^n)$, the Euclidean volume of the $n$-dimensional unit sphere in $\R^{n+1}$ (see Facts 3, 4 in~\cite[Section~2]{ABSW}).

By our standing assumption, $\Gamma$ is a congruence subgroup of $\isom$ which implies that we can effectively bound $\lambda_1(\HG)$ from below:
\begin{equation*}
\lambda_1(\HG) \ge \delta(n),
\end{equation*}
where $\delta(2) = \frac{3}{16}$ by Vigneras~\cite{Vign} and if $n \ge 3$, $\delta(n) = \frac{2n-3}{4}$ by Burger--Sarnak~\cite{BS}. Yet unproved conjectures of Selberg and Ramanujan would imply better bounds: $\delta(2) = \frac{1}{4}$ and $\delta(n) = n-1$ for $n\ge 3$.

All together for a congruence reflection group $\Gamma$ we have
\begin{equation*}
\delta(n)\cdot\vol(\HG)^\frac2n \le n\cdot\vol(\S^n)^\frac2n;
\end{equation*}
\begin{equation}\label{eq2}
\vol(\HG) \le \left(\frac{n}{\delta(n)}\right)^\frac{n}{2}\vol(\S^n).
\end{equation}

By the theorems of Wang~\cite{W} for $n\ge4$ and Borel~\cite{B} for $n = 2,\ 3$ there are only
finitely many (up to conjugacy) arithmetic subgroups of $\isom$ of bounded covolume. As the
right-hand side of (\ref{eq2}) depends only on the dimension, we immediately obtain our first
finiteness theorem:

\begin{theorem} \label{thm1}
For every $n\ge2$ there are only finitely many conjugacy classes of congruence reflection
subgroups of $\isom$.
\end{theorem}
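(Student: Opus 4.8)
The proof of Theorem~\ref{thm1} is essentially packaged into the inequalities assembled in the paragraphs immediately preceding the statement, so the plan is to stitch these together into a single clean argument. First I would recall that since $\Gamma$ is assumed to be a \emph{congruence} reflection group, the Burger--Sarnak spectral gap bound $\lambda_1(\HG) \ge \delta(n)$ applies, with the explicit value of $\delta(n)$ depending only on $n$; this is the place where the congruence hypothesis is genuinely used, as for general arithmetic (non-congruence) groups no such uniform lower bound on the spectral gap is available. Combining this with the Li--Yau type inequality~(\ref{ly}) and the fact that for a reflection group the conformal volume equals $\vol(\S^n)$, one obtains the key covolume bound~(\ref{eq2}), namely
\begin{equation*}
\vol(\HG) \le \left(\frac{n}{\delta(n)}\right)^{n/2}\vol(\S^n),
\end{equation*}
whose right-hand side depends only on $n$.

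With this uniform upper bound for the covolume in hand, the second ingredient is a Wang--Borel type finiteness statement: for each fixed $n \ge 2$ and each fixed bound $V$, there are only finitely many conjugacy classes of arithmetic subgroups of $\isom$ of covolume at most $V$. For $n \ge 4$ this is the theorem of Wang~\cite{W}, and for $n = 2, 3$ it follows from Borel's work~\cite{B}. (One should note that a congruence reflection group is in particular an arithmetic subgroup of $\isom$ of the simplest type, so these theorems do apply; the reflection-group condition is only needed to control the conformal volume.) Applying this finiteness theorem with $V = (n/\delta(n))^{n/2}\vol(\S^n)$ immediately yields that there are only finitely many conjugacy classes of arithmetic subgroups, and a fortiori only finitely many conjugacy classes of congruence reflection subgroups, of $\isom$ in dimension $n$.

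Since this argument is carried out for a single fixed $n$, the conclusion is the per-dimension finiteness asserted in the theorem — there is no claim (and at this stage no need) that the total count over all $n$ is finite; indeed the later Theorem~\ref{thm2} and propositions are precisely what upgrade this to a bound on $n$ itself. I do not anticipate a serious obstacle: every step is a direct citation or a one-line combination of already-stated inequalities. The only point requiring a little care is bookkeeping with the normalisations — the distinction between $\isom$ and $\isomp$ (a subgroup of index at most $2$), and between the Euler--Poincar\'e normalisation used in the covolume bounds of Section~\ref{sec:prelim} and the hyperbolic volume appearing in~(\ref{eq2}) — but for the qualitative finiteness statement these constants are harmless, and the substantive use of normalisations is deferred to the quantitative results in Section~\ref{sec:qaunt}.
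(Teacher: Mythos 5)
Your proposal is correct and follows essentially the same route as the paper: the congruence hypothesis gives the Burger--Sarnak/Vign\'eras spectral gap $\delta(n)$, which together with inequality~(\ref{ly}) and $\vol_c(\HG)=\vol(\S^n)$ yields the dimension-only covolume bound~(\ref{eq2}), and the finiteness theorems of Wang ($n\ge4$) and Borel ($n=2,3$) then give finitely many conjugacy classes. Your remarks on where the congruence assumption enters and on the harmlessness of the normalisation constants match the paper's argument, so nothing further is needed.
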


Let $n\ge 3$. We have $\delta(n)\ge\frac{2n-3}{4}$;
\begin{equation}\label{eq3}
\frac{\vol(\HG)}{\vol(\S^n)}\le \left(\frac{4n}{2n-3}\right)^\frac{n}{2} \le 4^\frac{n}{2}.
\end{equation}

By Corollary~\ref{cor:vol}, $\vol(\HG)$ is bounded below by a function which grows super-expo\-nen\-tial\-ly
with $n$. It follows that the same holds for the quotient $\vol(\HG)/\vol(\S^n)$ as
$\vol(\S^n)\to0$ when $n\to\infty$. Hence the left-hand side of \eqref{eq3} grows super-exponentially with $n$ while the right-hand side is exponential. This gives our second finiteness theorem:

\begin{theorem}\label{thm2}
If $n$ is sufficiently large then $\isom$ does not contain any congruence reflection subgroups.
\end{theorem}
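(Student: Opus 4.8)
The plan is to derive a contradiction from two estimates on $\vol(\HG)$ that scale incompatibly in $n$: an \emph{exponential} upper bound coming from the spectral-gap argument, and a \emph{super-exponential} lower bound coming from the arithmeticity. First I would observe that a congruence reflection subgroup $\Gamma\subset\isom$ is in particular arithmetic, and by Vinberg's lemma (cited in Section~\ref{sec:prelim}) it is defined by a quadratic form, hence is of the simplest type. Therefore, as soon as $n\ge 4$, the covolume $\vol(\HG)$ is bounded below by $\omega_c(n)$ or $\omega_{nc}(n)$ (according to whether $\Gamma$ is cocompact or not) via Theorems~\ref{thm:vol_even} and~\ref{thm:vol_odd}, and by Corollary~\ref{cor:vol} this lower bound grows super-exponentially with $n$.

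Next I would invoke the other side, which is already assembled in the text: the Li--Yau inequality~\eqref{ly}, the identity $\vol_c(\HG)=\vol(\S^n)$ valid for reflection groups, and the Burger--Sarnak bound $\delta(n)\ge\frac{2n-3}{4}$ together give~\eqref{eq3}, i.e.\ $\vol(\HG)\le 4^{n/2}\,\vol(\S^n)$. Since $\vol(\S^n)\to 0$ as $n\to\infty$, the sequence $\big(\vol(\S^n)\big)_n$ is bounded, so the right-hand side of this inequality is $O(4^{n/2})$ and thus grows at most exponentially in $n$. Comparing: for all sufficiently large $n$ the super-exponential lower bound on $\vol(\HG)$ exceeds the exponential quantity $4^{n/2}\,\vol(\S^n)$, which is absurd. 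Hence no congruence reflection subgroup of $\isom$ can exist once $n$ is large enough.

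The step carrying all the content is the passage to the super-exponential lower bound, i.e.\ Corollary~\ref{cor:vol}; granting it, the remainder of the argument is a one-line comparison of growth rates, so for the qualitative statement there is no real obstacle. The genuine difficulty — and the reason the present paper adds the congruence hypothesis — lies not here but in making the threshold explicit: to extract an effective $n_0$ one must bound below the products $\prod_i \zeta_k(2i)$ and the relevant $L$-values (each Euler factor is $\ge 1$, so these products are at least $1$, or at least an explicit constant for the fixed fields $k_0,\ell_0$ occurring in Theorems~\ref{thm:vol_even}--\ref{thm:vol_odd}) and then compare the resulting factorial and discriminant growth against the base $4$. That quantitative analysis, which rests on the estimates of~\cite{B1, BE}, is precisely what is carried out in Section~\ref{sec:qaunt}.
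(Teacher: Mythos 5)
Your argument is correct and is essentially the paper's own proof: combine the Li--Yau inequality with $\vol_c(\HG)=\vol(\S^n)$ and the Burger--Sarnak spectral gap to get the exponential bound \eqref{eq3}, then contrast it with the super-exponential lower bound of Corollary~\ref{cor:vol} (the intermediate appeal to Vinberg's lemma is harmless but unnecessary, since Theorems~\ref{thm:vol_even} and~\ref{thm:vol_odd} bound covolumes of all arithmetic subgroups of $\PO(n,1)$). Only your closing remark is slightly off: the congruence hypothesis is not merely for effectivity of the threshold but is what licenses the uniform spectral-gap bound $\lambda_1\ge\delta(n)$ in the first place, and you do in fact use it there.
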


Our next goal is to obtain explicit bounds for the dimensions and arithmetic invariants of the congruence reflection groups.

\section{Quantitative results} \label{sec:qaunt}

Let $M(n) = (\frac{n}{\delta(n)})^{\frac{n}2}$, where $\delta(n)$ is defined as in the previous section. By \eqref{eq2}, if $\Gamma$ is a congruence reflection subgroup of $\PO(n,1)$, then
\begin{equation} \label{eq4}
\frac{\vol(\HG)}{\vol(\S^n)} \le M(n).
\end{equation}
Let us denote the quotient $\frac{M(n)}{\vol(\HG)/\vol(\S^n)}$ by $R(n)$. Inequality \eqref{eq4} implies that $\PO(n,1)$ can have congruence reflection subgroups only if $R(n)\ge 1$. The value of $R(n)$ represents the size of the parameter space of such reflection subgroups and we can bound it from above using Theorems~\ref{thm:vol_even} and~\ref{thm:vol_odd}. We denote the bounds for the cocompact and non-cocompact cases by $R_c(n)$ and $R_{nc}(n)$, respectively:
\begin{equation} \label{eq5}
R_c(n) = \frac{M(n)}{\omega_c(n)/\vol(\S^n)}, \quad
R_{nc}(n) = \frac{M(n)}{\omega_{nc}(n)/\vol(\S^n)}.
\end{equation}
The numerical \emph{upper bounds} for $M(n)$, $R_{c}(n)$ and $R_{nc}(n)$ in dimensions less than $30$ are shown on Table~\ref{table1}. For $n=2$ and $3$ we used the following known sharp bounds for the minimal volume: $\omega_c(2) = \pi/42$ (by Siegel~\cite{Siegel}), $\omega_{nc}(2) = \pi/6$ (attained on a group generated by reflections in the sides of the hyperbolic triangle $(\frac{\pi}2,\frac{\pi}3,\frac{\pi}\infty)$), $\omega_c(3) = 0.019525...$ (by Chinburg--Friedman~\cite{CF}), and $\omega_{nc}(3) = 0.0423...$ (by Meyerhoff~\cite{Mey}).

\def\arraystretch{1.2}
\begin{table}[ht]
\begin{minipage}[b]{0.45\linewidth}
$\begin{array}[d]{|l|l|l|l|}
  \hline
  n & M(n) & R_{c}(n) & R_{nc}(n)\\
  \hline
  2 & 10.67 & 1792 & 256 \\
  3 & 8.00  & 8087.73 & 3733.19 \\
  4 & 10.24 & 294912.00 & 39321.60\\
  5 & 13.80 & 559265.56 & 2344318.63\\
  6 & 18.97 & 652099.51 & 15728640 \\
  7 & 26.32 & 3135381.86 & 904118049\\
  8 & 36.72 & 52878455.97 & 5.12\cdot 10^{10} \\
  9 & 51.40 & 364096.25 &  2.82\cdot 10^{13} \\
  10 & 72.12 & 3247.27 & 2.66\cdot 10^{13} \\
  11 & 101.36 & 329.09 & 9.23\cdot 10^{13} \\
  12 & 142.61 & 270.58 & 1.58\cdot 10^{14} \\
  13 & 200.82 & 1.08\cdot 10^{-3} &  2.81\cdot 10^{15} \\
  14 & 282.97 & 1.39\cdot 10^{-8}  &  3.74\cdot 10^{15} \\
  15 & 398.94 & 6.58\cdot 10^{-12} & 8.54\cdot 10^{16} \\
  \hline
\end{array}$
\end{minipage}
\hspace{0.5cm}
\begin{minipage}[b]{0.45\linewidth}
$\begin{array}[d]{|l|l|l|l|}
  \hline
  n & M(n) & R_{c}(n) & R_{nc}(n)\\
  \hline
  16 & 562.68 & 6.73\cdot 10^{-14} & 2.13\cdot 10^{18} \\
  17 & 793.88 & 4.39\cdot 10^{-23} & 1.14\cdot 10^{21} \\
  18 & 1120.4 & 2.57\cdot 10^{-31} & 2.78\cdot 10^{18} \\
  19 & 1581.6 & 1.95\cdot 10^{-37} & 6.07\cdot 10^{16} \\
  20 & 2232.3 & 8.99\cdot 10^{-42} & 8.17\cdot 10^{14} \\
  21 & 3153.3 & 3.72\cdot 10^{-55} & 2.81\cdot 10^{14} \\
  22 & 4453.4 & 4.05\cdot 10^{-67} & 5.79\cdot 10^{12} \\
  23 & 6290.4 & 2.09\cdot 10^{-76} & 5.16\cdot 10^{12} \\
  24 & 8886.0  & 1.96\cdot 10^{-83} & 6.55\cdot 10^{12} \\
  25 & 12553.9 & 2.40\cdot 10^{-101} & 4.60\cdot 10^{14} \\
  26 & 17737.2 & 2.32\cdot 10^{-117} & 4.77\cdot 10^{8} \\
  27 & 25062.5 & 4.06\cdot 10^{-130} & 11748.74 \\
  28 & 35415.3 & 3.93\cdot 10^{-140} & 0.24\\
  29 & 50047.4 & 7.49 \cdot 10^{-163} & 3.33\cdot 10^{-4} \\
  \hline
\end{array}$
\end{minipage}
\bigskip

\caption{Upper bounds for $M(n)$, $R_{c}(n)$ and $R_{nc}(n)$ for $n<30$}\label{table1}
\end{table}

This computation immediately gives us a quantitative version of Theorem~\ref{thm2}:
\begin{prop}\label{prop:dim}
There are no cocompact congruence reflection subgroups in $\isom$ for $n > 12$, and no any congruence reflection subgroups in $\isom$ for $n > 27$.
\end{prop}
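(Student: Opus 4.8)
The plan is to combine inequality~\eqref{eq4} with the volume lower bounds of Theorems~\ref{thm:vol_even} and~\ref{thm:vol_odd}. If $\Gamma$ is a cocompact congruence reflection subgroup of $\isom$ then $\vol(\HG)\ge\omega_c(n)$, so~\eqref{eq4} forces $\omega_c(n)/\vol(\S^n)\le M(n)$, i.e.\ $R_c(n)\ge1$ in the notation of~\eqref{eq5}; similarly an arbitrary congruence reflection subgroup forces $R_{nc}(n)\ge1$. Hence it suffices to prove $R_c(n)<1$ for all $n\ge13$ and $R_{nc}(n)<1$ for all $n\ge28$. For $13\le n\le29$ (respectively $n=28,29$) this is exactly what the entries of Table~\ref{table1} record, so the only remaining task is the range $n\ge30$.

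Here I would argue as follows. By~\eqref{eq3} we have $M(n)\le4^{n/2}$, so it is enough to show $\omega_c(n),\omega_{nc}(n)>4^{n/2}\vol(\S^n)$ for $n\ge30$. Writing $n=2r$ or $n=2r-1$ and replacing every factor $\zeta_{k_0}(2i)$, $\zeta(2i)$, $L_{\ell|k}(r)$ in the formulas of Theorems~\ref{thm:vol_even} and~\ref{thm:vol_odd} by $1$ (each being $\ge1$), one bounds $\omega_c(n)$ (resp.\ $\omega_{nc}(n)$) below by an elementary expression featuring the super-exponential factor $5^{r^2+r/2}$ in the cocompact case and, in every case, the factorial products $\prod_i\frac{(2i-1)!^2}{(2\pi)^{4i}}$ or $\prod_i\frac{(2i-1)!}{(2\pi)^{2i}}$ over $(2r-1)!!$ or $(r-1)!$; since $4^{n/2}\vol(\S^n)$ carries the same $(2r-1)!!$ or $(r-1)!$ in its denominator, cancellation reduces the claim to an elementary inequality in $r$, which is exactly the effective form of Corollary~\ref{cor:vol}.

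The step I expect to be the main obstacle is the bookkeeping: the formulas split by the parity of $n$, the parity of $r$, and $r\bmod4$, with leading constants $2$, $2^r-1$, $4^r-1$, $3^{r-1/2}/2^r$, $1/2^{r-1}$, etc., so I would bound these constants (and $L_{\ell|k}(r)$, $\zeta(r)$) above and below by explicit quantities, check the finitely many base cases $n=30,31,32,33$ directly --- either by continuing the {\tt PARI/GP} computation behind Table~\ref{table1} or from the explicit estimates, where the $5^{r^2}$- and $2^r$-factors give a comfortable margin --- and then verify that for $n$ beyond these the lower estimate for $\omega_\bullet(n+2)$ dominates $4\cdot\tfrac{\vol(\S^{n+2})}{\vol(\S^n)}$ times the upper estimate for $\omega_\bullet(n)$, a ratio that tends to $0$ while the former grows factorially. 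This propagates $\omega_c(n),\omega_{nc}(n)>4^{n/2}\vol(\S^n)$ to all $n\ge30$, hence $R_c(n),R_{nc}(n)<1$ there, completing the proof. The underlying phenomenon --- the arithmetic volume bound growing factorially, outrunning the exponential $4^{n/2}$ and the factorially decaying $\vol(\S^n)$ --- is robust; only the case-splitting and the handful of explicit base values need care.
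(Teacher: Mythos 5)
Your proposal is correct and follows essentially the same route as the paper: the paper deduces the proposition directly from inequality \eqref{eq4}, the numerical bounds $R_c(n)<1$ for $13\le n\le 29$ and $R_{nc}(n)<1$ for $n=28,29$ in Table~\ref{table1}, and the super-exponential growth of $\omega_c(n),\omega_{nc}(n)$ (Corollary~\ref{cor:vol}) against the exponential bound $M(n)\le 4^{n/2}$ for the tail $n\ge 30$, which is exactly your plan with the tail argument spelled out in more detail. One small caution: $L_{\ell|k}(r)$ is not literally $\ge 1$ (only the $\zeta$-factors at even arguments are), but a crude lower bound such as $L_{\ell|k}(r)\ge \zeta_k(2r)/\zeta_k(r)$ suffices given the $5^{r^2}$-size margin, so this does not affect the argument.
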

In connection with this proposition let us recall that a general result due to Vinberg states that there are no arithmetic hyperbolic reflection groups in dimensions $n > 29$~\cite{vinb2, vinb3}. The examples of arithmetic reflection groups are known only up to dimension $8$ in the cocompact case (by Bugaenko~\cite{Bug}) and up to dimension $21$ in general (with a gap for $n=20$, the case $n = 21$ was done by Borcherds~\cite{Borch}, and dimensions up to $19$ are covered by the work of Vinberg~\cite{vinb1} and Vinberg--Kaplinskaya~\cite{VK}).

\medskip

Using Proposition~\ref{prop:vol} we can say more about the fields of definition of congruence reflection groups:

\begin{prop} \label{prop:fields}
The fields of definition $k$ of cocompact congruence reflection subgroups of $\PO(n,1)$ satisfy the following conditions:
\begin{enumerate}[(i)]
\item $n = 4:$\quad $d = [k:\Q] \le 6$, $\D_k \le  1\,407\,650$
(with $\D_k\le 262$ for $d = 2$, $\D_k \le 2\,244$ for $d = 3$, $\D_k \le 19\,210$ for $d = 4$, and $\D_k \le  164\,442$ for $d = 5$);
\item $n = 5:$\quad $d \le 6$, $\D_k \le 1\,393\,406$
(with $\D_k\le 214$ for $d = 2$, $\D_k \le 1\,928$ for $d = 3$, $\D_k \le 17\,302$ for $d = 4$, and $\D_k \le  155\,272$ for $d = 5$);
\item $n = 6:$\quad $d = 3$, $\D_k = 49$ or $81$, or $d=2$, $\D_k = 5,8,12,13,17,21,24,28;$
\item $n = 7:$\quad $d=4$, $\D_k \le 1062$ or $d=3$, $\D_k\le 205$ or $d = 2$, $\D_k\le 39;$
\item $n = 8, 9:$\quad $d=2$, $\D_k = 5, 8, 12, 13;$
\item $n = 10, 11:$\quad $d=2$, $\D_k = 5, 8;$
\item $n = 12:$\quad $d=2$, $\D_k = 5$.
\end{enumerate}
\end{prop}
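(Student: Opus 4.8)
The plan is to exploit the lower bound for covolumes from Proposition~\ref{prop:vol}, which is explicit in the arithmetic data $(n,k,f)$, together with the universal upper bound \eqref{eq4}, namely $\vol(\HG)/\vol(\S^n)\le M(n)$, and then run through the two inequalities to squeeze the field $k$. The key observation is that $\nu(n,k,f)$ grows rapidly with the degree $d=[k:\Q]$ and with the discriminant $\D_k$: in the even case $C_1(n,k)$ contains the factor $\D_k^{r^2+r/2}$ and the $d$-th power $\bigl(\prod_{i=1}^r (2i-1)!/(2\pi)^{2i}\bigr)^d$, while each $\zeta_k(2i)>1$ and each $\lambda_v\ge (q_v^r-1)/2$ only helps; in the odd case the analogous growth comes from $C_2(n,k)$ together with the factor $(\D_\ell/\D_k^{[\ell:k]})^{(2r-1)/2}\ge 1$ and $L_{\ell|k}(r)\cdot\prod\zeta_k(2i)$, which is $>1$ as well (the $L$-value is positive and, after dropping ramified Euler factors, bounded below). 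So for fixed $n$ the chain $\nu(n,k,f)\le \vol(\HG)\le M(n)\cdot\vol(\S^n)$ forces both $d$ and $\D_k$ into a finite range.

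Concretely, I would first reduce to bounding $\D_k$ as a function of $d$: for each $d$, the Minkowski/Odlyzko-type lower bounds for $\D_k$ of a totally real field of degree $d$ give the smallest admissible discriminants, and conversely the inequality $\D_k^{\,r^2+r/2}\cdot A(n)^{d}\le M(n)\cdot\vol(\S^n)$ (with $A(n)=\prod_{i=1}^r (2i-1)!/(2\pi)^{2i}$ in the even case, and the corresponding product in the odd case, absorbing the remaining constant factors of $C_1$ or $C_2$ and using $\zeta_k(2i)>1$, $\lambda_v>1$, $L_{\ell|k}(r)$ bounded below, $i_f$ bounded as in the cited references) yields $\D_k\le \bigl(M(n)\vol(\S^n)/A(n)^d\bigr)^{1/(r^2+r/2)}$. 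Since $A(n)>1$ for $n\ge 4$, the right-hand side decreases as $d$ grows, so there is a largest $d$ for which it exceeds the minimal possible $\D_k$ for that degree; that is the claimed bound on $d$. For each surviving $d$ one then substitutes the best known table of totally real fields of degree $d$ with small discriminant (from the literature, e.g.\ the tables used in \cite{B1, BE}) and reads off the finitely many $\D_k$ that satisfy the inequality, which produces the explicit lists in items (i)--(vii). The values of $M(n)$ are taken from Table~\ref{table1}, and $\vol(\S^n)$ is the explicit Euclidean sphere volume; so all quantities are numerically evaluable in {\tt PARI/GP}.

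The refinements for small $d$ (the parenthetical bounds $\D_k\le 262$ for $d=2$ in dimension $4$, etc.) come from the same inequality applied with $d$ fixed: there the dominant factor is $\D_k^{\,r^2+r/2}$ against the fixed quantity $M(n)\vol(\S^n)/A(n)^{d}$, and one simply takes the $(r^2+r/2)$-th root. For $n=6,7$ and above, where the admissible discriminants are few, one can additionally use that $\zeta_k(2i)$ and the $\lambda_v$-contributions are strictly larger than $1$, together with the Euler-factor lower bounds $\lambda_v\ge (q_v^r-1)/2$ for the (necessarily nonempty when $n\ge 6$, by the reflection-group/anisotropy constraints used in \cite{ABSW}) set $T$, to eliminate the borderline cases and pin $\D_k$ down to the short explicit lists.

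The main obstacle I expect is not the inequality manipulation but the bookkeeping of the lower-order factors in $\nu(n,k,f)$ — specifically, controlling $i_f$ (the index of the principal arithmetic subgroup of minimal covolume in its normalizer) and the product $\prod_{v\in T}\lambda_v$ in a way that is simultaneously a genuine lower bound and tight enough that the resulting $\D_k$-bounds match the stated numbers. One must invoke the bounds on $i_f$ from \cite[Section~3.5]{B1} and \cite[Proposition~4.12]{BE}, argue that dropping all $\lambda_v>1$ is harmless for the \emph{upper} bound on $\D_k$ (it only makes $\nu$ larger, hence the constraint stronger), and be careful in the odd-dimensional case that $L_{\ell|k}(r)$, though possibly less than $1$, is bounded below by an explicit positive constant after removing the ramified Euler factors (as in the derivation of Theorem~\ref{thm:vol_odd}). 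Once those inputs are in place, the proof is a finite computation: evaluate $M(n)\vol(\S^n)$, divide by the constant $d$-dependent factor, extract the root, and intersect with the discriminant tables.
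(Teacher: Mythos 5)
Your overall strategy coincides with the paper's: chain the lower bound of Proposition~\ref{prop:vol} against the upper bound \eqref{eq4}, discard the factors $\prod_i\zeta_k(2i)\cdot\prod_{v\in T}\lambda_v>1$ and $(\D_\ell/\D_k^{[\ell:k]})^{\cdot}\ge 1$, and then confront the resulting inequality with Odlyzko's discriminant bounds and tables of totally real fields. However, two of your pivotal steps are wrong or missing. First, the claim that $A(n)=\prod_{i=1}^r(2i-1)!/(2\pi)^{2i}>1$ for $n\ge 4$ is false: each factor $(2i-1)!/(2\pi)^{2i}$ is less than $1$ for $i\le 8$, so in the entire relevant range ($r\le 6$) the product is tiny (about $10^{-4}$ already for $r=2$). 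Hence your upper bound on $\D_k$ \emph{increases} with $d$ (as the lists in (i)--(ii) themselves show), and your stated mechanism for bounding the degree (``the right-hand side decreases as $d$ grows'') collapses. The correct termination argument, as in the paper, is a race of growth rates: the minimal discriminant of a totally real field of degree $d$ (Odlyzko) grows geometrically in $d$ faster than the bound of type $(\mathrm{const}/B_1(r)^{d})^{1/(\cdot)}$ coming from \eqref{eq_e4}, which is what yields $d\le 6$ for $n=4,5$, $d\le 4$ for $n=6$, etc.

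Second, and more consequentially, you defer the treatment of $i_f$ to the cited references, but this is precisely the step that fixes the exponent of $\D_k$ and the constants, hence the stated numbers. The paper bounds $i_f$ by $2^dh_k$ in the even case (and by expressions in $h_\ell$ in the odd case), and then bounds the class number by $h_k\le 16(\pi/12)^d\D_k$ (Brauer--Siegel plus Friedman's regulator bound, inequality \eqref{eq_e2}); since $h_k$ grows with $\D_k$, it cannot be absorbed into a constant, and this step eats one full power of $\D_k$ (the working exponent is $r^2+r/2-1$, not $r^2+r/2$) and contributes the $12/(2\pi)$-type factor in $B_1$. Without it your ``finite computation'' cannot reproduce the bounds in the statement. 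Moreover, your procedure only yields upper bounds on $\D_k$, whereas the exact short lists (e.g. $\D_k=49,81$ for $n=6$, $d=3$, or $\D_k=5,8,12,13$ for $n=8,9$) come from the paper's second pass: look up the actual class numbers of the finitely many admissible fields and feed them back into \eqref{eq_e3}, which kills further candidates (for instance $d=4$, $\D_k=725$ for $n=6$). Finally, in odd dimensions one must keep track of the pair $(\D_k,\D_\ell)$ and of $h_\ell$ rather than dropping $(\D_\ell/\D_k^2)^{r-1/2}\ge 1$ wholesale: the elimination of $d=3$ for $n=9$ rests on the nonexistence of a sextic field $\ell$ with $\D_\ell\le 7446$ over the cubic field of discriminant $49$, a step absent from your outline; and your parenthetical claim that $T$ is necessarily nonempty for $n\ge 6$ is unjustified and is not used in the paper's argument.
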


\begin{proof} We can employ the following known bounds:
\begin{eqnarray*}
\prod_{i=1}^r\zeta_k(2i) \cdot \prod_{v\in T}\lambda_v > 1 \quad \text{(cf.~\cite[Proposition~3.3]{B1});}\\
L_{\ell/k}(r)\prod_{i=1}^{r-1}\zeta_k(2i) \cdot \prod_{v\in T}\lambda_v > 1 \quad \text{(cf.~\cite[Section~7.2]{BE})}\\
\end{eqnarray*}
in cases (1) and (2) of Proposition~\ref{prop:vol}, respectively. We also recall that the relative discriminant
$$ \D_{\ell/k} = \D_\ell/\D_k^{[l:k]} \ge 1.$$
These estimates allow us to bound $\nu(n,k,f)$ from below by $\frac{1}{i_f}\cdot C_i(n,k)$, $i = 1, 2$, which then can be used to bound the degree of $k$ and its discriminant. In practice we are going to use more accurate estimates in order to obtain better bounds for the invariants of the defining fields but the idea is the same.

\medskip

Let us first consider the even dimensional case $n = 2r$, $r\ge 2$. We have (see~\cite[Section~3.5 and Proposition~3.6]{B1}):
\begin{equation}\label{eq_e1}
\nu(n,k,f) \ge \frac{1}{2^dh_k}\cdot C_1(n,k),
\end{equation}
where $h_k$ denotes the class number of $k$. Following~\cite[Section~7.2]{BE} we can bound $h_k$ by
\begin{equation}\label{eq_e2}
h_k \le 16\left(\frac{\pi}{12}\right)^d\D_k.
\end{equation}
(This bound follows from the Brauer--Siegel theorem and Friedman's bound for the regulator of a number field.)

Now by \eqref{eq4}, congruence reflection subgroups of $\PO(n,1)$ defined over $k$ may exist only if $\nu(n,k,f)$ is smaller than $M(n)\cdot\vol(\S^n)$, so we have:
\begin{equation*}
\frac{\D_k^{r^2+r/2}}{h_k} \cdot \frac{2\cdot (2\pi)^r}{(2r-1)!!} \cdot \left(\frac{1}{2}\cdot\prod_{i=1}^{r}\frac{(2i-1)!}{(2\pi)^{2i}}\right)^d \le M(n)\cdot\vol(\S^n);
\end{equation*}
\begin{equation}\label{eq_e3}
\frac{\D_k^{r^2+r/2}}{h_k} \cdot \left(\frac{1}{2}\cdot\prod_{i=1}^{r}\frac{(2i-1)!}{(2\pi)^{2i}}\right)^d \le M(n).
\end{equation}
At this point we can use a two-step procedure: First, employ the upper bound \eqref{eq_e2} for $h_k$ and using Odlyzko's bounds for discriminants (cf.~\cite{Odl, Odl-www}) obtain from \eqref{eq_e3} a list of admissible fields $k$. Then, if possible, check the precise values of the class numbers of these fields (e.g. in~\cite{tables}) and using again inequality \eqref{eq_e3} narrow down the list to its final form. Below is a brief description of the computations involved.

\noindent $\mathbf{r=2:}$ By \eqref{eq_e3}, \eqref{eq_e2} and Table~\ref{table1},
\begin{equation*}
\frac{\D_k^{5-1}}{16}\cdot \left(\frac{12}{2\pi}\cdot\frac{1}{(2\pi)^2}\cdot\frac{3!}{(2\pi)^4}\right)^d \le 10.24;
\end{equation*}
\begin{equation}\label{eq_e4}
\D_k \le \frac{163.84^{1/4}}{B_1(2)^{d/4}}, \text{ with }  B_1(2)=\frac{12}{2\pi}\cdot\frac{1}{(2\pi)^2}\cdot\frac{3!}{(2\pi)^4}.
\end{equation}
Using Odlyzko's bounds, we obtain from \eqref{eq_e4} that $d\le 6$. Now the same inequality gives the upper bounds for discriminants for each degree. These are the bounds given in \emph{(i)}. The list of the admissible fields is quite large here and we will not try the second step which involves a precise computation of the class numbers, this is more accessible in higher dimensions as we are going to see later on.

\medskip

\noindent $\mathbf{r=3:}$ We have
\begin{equation*}
\D_k^{9.5} \le \frac{18.97 \cdot 16}{B_1(3)^d},\quad  B_1(r) = \frac{12}{2\pi} \cdot \prod_{i=1}^r\frac{(2i-1)!}{(2\pi)^{2i}}.
\end{equation*}
It follows that $d\le 4$ and for $d = 4$, $\D_k\le 939$. There is only one such totally real field in degree $4$, it has $\D_k = 725$ and $h_k = 1$. Substituting this data in inequality \eqref{eq_e3} we come to a contradiction. Hence $d \le 3$. For $d = 3$ we get $\D_k \le 197$. There are $4$ such totally real fields, all have $h_k = 1$. Using again inequality \eqref{eq_e3} we obtain $\D_k = 49$ or $81$. Similarly, for $d = 2$ we first get $\D_k \le 41$, the class numbers are equal to $1$ except for $\D_k=40$, when $h_k = 2$, and narrowing down the list we come to the $8$ real quadratic fields from part \emph{(iii)}.

\medskip

\noindent $\mathbf{r=4:}$ $\displaystyle \D_k^{18-1} \le \frac{36.72\cdot 16}{B_1(4)^d}.$
\newline
We get $d \le 3$, $\D_k \le 59$ hence for $d = 3$, $\D_k = 49$ and $h_k = 1$ which leads to a contradiction in \eqref{eq_e3}. So $d = 2$ and using the additional information about class numbers we come the list of $4$ real quadratic fields from part \emph{(v)}.

\medskip

Cases $\mathbf{r = 5}$ and $\mathbf{r = 6}$ are entirely similar and we skip the details.

\medskip

Now consider the odd dimensional case $n = 2r-1$, $r\ge 3$. Following~\cite[Proposition~4.12 and Sections~7.2, 8.2]{BE}, we have
\begin{equation}\label{eq_o1}
\nu(n,k,f) \ge \frac{C_2(n,k)\cdot \left(\D_\ell/\D_k^{[l:k]}\right)^{r-1/2}}{2^{d+1}h_l},\ \text{if $r$ is odd;}
\end{equation}
\begin{equation*}
\nu(n,k,f) \ge \frac{C_2(n,k)\cdot \left(\D_\ell/\D_k^{[l:k]}\right)^{r-3/2}}{2^{2d-1}h_l},\ \text{if $r$ is even.}
\end{equation*}
When $k\neq\Q$, admissible orthogonal groups that give rise to arithmetic subgroups of $\PO(n,1)$ are of outer type and field $\ell$ is always a quadratic extension of $k$ (see~\cite[Section~2.3]{BE}). Hence we can rewrite inequalities \eqref{eq_o1} as follows:
\begin{eqnarray*}
\nu(n,k,f) \ge \frac{\D_k^{r^2-5r/2+1}\cdot \D_\ell^{r-1/2} \cdot B_2(r)^d \cdot \frac{4\pi^r}{(r-1)!}}{2^{d+1}h_l},\ \text{if $r$ is odd;} \\
\nu(n,k,f) \ge \frac{\D_k^{r^2-5r/2+3}\cdot \D_\ell^{r-3/2} \cdot B_2(r)^d \cdot \frac{4\pi^r}{(r-1)!}}{2^{2d-1}h_l},\ \text{if $r$ is even,}
\end{eqnarray*}
where $\displaystyle B_2(r) = \frac{(r-1)!}{(2\pi)^r} \cdot \prod_{i=1}^{r-1}\frac{(2i-1)!}{(2\pi)^{2i}}$.

Substituting in \eqref{eq4} and taking into account that $\displaystyle \frac{4\pi^r}{(r-1)!} = 2\cdot\vol(\S^n)$, we obtain
\begin{equation}\label{eq_o2}
\frac{B_2(r)^d}{2^d h_l} \cdot \D_k^{r^2-5r/2+1} \cdot \D_\ell^{r-1/2} \le M(n),\ \text{if $r$ is odd;}
\end{equation}
\begin{equation*}
\frac{4\cdot B_2(r)^d}{2^{2d} h_l} \cdot \D_k^{r^2-5r/2+3} \cdot \D_\ell^{r-3/2} \le M(n),\ \text{if $r$ is even.}
\end{equation*}

Now we can use inequality \eqref{eq_e2} to bound $h_l$, which gives
\begin{equation}\label{eq_o3}
\D_k^{r^2-5r/2+1} \cdot \D_\ell^{r-3/2} \le 16\cdot M(n) \left(\frac{12^2\cdot B_2(r)}{2\pi^2}\right)^{-d},\ \text{if $r$ is odd;}
\end{equation}
\begin{equation*}
\D_k^{r^2-5r/2+3} \cdot \D_\ell^{r-5/2} \le 4\cdot M(n) \left(\frac{12^2\cdot B_2(r)}{4\pi^2}\right)^{-d},\ \text{if $r$ is even.}
\end{equation*}

Consider the odd rank case. For a quadratic extension $\ell/k$, we have $\D_\ell\ge\D_k^2$, hence
\begin{equation}\label{eq_o4}
\D_k^{r^2-r/2-2} \le 16\cdot M(n) \left(\frac{72\cdot B_2(r)}{\pi^2}\right)^{-d};
\end{equation}
\begin{equation}\label{eq_o5}
\D_\ell^{r-3/2} \le 16\cdot M(n) \left(\frac{72\cdot B_2(r)}{\pi^2}\right)^{-d}\cdot\D_k^{-r^2+5r/2-1}.
\end{equation}

\noindent
Let $\mathbf{r = 3}$. By \eqref{eq_o4} and Table~\ref{table1} combined with Odlyzko's bounds we obtain $d\le 6$, $\D_k\le 1\,393\,406$. Applying \eqref{eq_o4} for each $d = 2, \dots ,5$, we get part \emph{(ii)} of the proposition.

\medskip

\noindent $\mathbf{r=5:}$ By \eqref{eq_o4}, $d\le 3$ and when $d = 3$, $\D_k = 49$. Now by \eqref{eq_o5}, for $\D_k = 49$ we should have $\D_\ell \le 7446$ and $[\ell:\Q] = 6$, but there are no such fields. So $d = 2$ and $\D_k \le 16$, which completes the proof of part \emph{(v)}.

\medskip

It remains to consider the case when $n$ is odd and $r = \frac12(n+1)$ is even. From the second inequality in \eqref{eq_o3} we now have
\begin{equation}\label{eq_o6}
\D_k^{r^2-r/2-2} \le 4\cdot M(n) \left(\frac{36\cdot B_2(r)}{\pi^2}\right)^{-d};
\end{equation}
\begin{equation}\label{eq_o7}
\D_\ell^{r-5/2} \le 4\cdot M(n) \left(\frac{36\cdot B_2(r)}{\pi^2}\right)^{-d}\cdot\D_k^{-r^2+5r/2-3}.
\end{equation}

For $\mathbf{r=4}$, we get from \eqref{eq_o6} that $d\le 4$, and for $d = 4$, $\D_k \le 1062$, for $d = 3$, $\D_k \le 205$ and for $d = 2$, $\D_k\le 39$ (part \emph{(iv)} of the proposition). For $\mathbf{r=6}$, again from \eqref{eq_o6}, we deduce that $d = 2$ and $\D_k \le 9$, hence part \emph{(vi)}.
\end{proof}

This result can be further improved using more careful case-by-case considerations based on the same ideas. This particularly concerns the first four cases. The cases $n = 2$ and $3$ were considered in~\cite{LMR, Mac, Bel} using a more specific approach, we refer to these papers for the analogue of Proposition~\ref{prop:fields} for these dimensions. As can be seen from the proposition, the list of possible fields becomes larger and, in fact, grows considerably when $n$ is small.

\medskip

Every field in Proposition~\ref{prop:fields} gives rise to a finite number of quadratic forms which may be associated to congruence reflection groups and can be effectively enumerated. Indeed, using inequalities \eqref{eq_o5}, \eqref{eq_o7} from the previous argument we can bound $\D_\ell$ in the odd dimensional case, and hence give a list of possible pairs of fields $(k, \ell)$ for each $n$. Then for every $(k,\ell)$ from this list we can bring in the bounds for $\lambda$-factors from Proposition~\ref{prop:vol} and hence give a list of possible sets $T$. For the places of $k$ outside $T$ the local type of the corresponding quadratic form is fixed, and for each place in $T$ we have several choices. Using some basic properties of quadratic
forms and the local-global principle (see~\cite{OM}) one can write down all possible variants.

The list of quadratic forms which are obtained this way is quite large and we will not present it here. Most of the candidates from this list will probably not be reflective which can be determined using an appropriate variant of Vinberg's algorithm. There are some finer geometric properties satisfied by reflective quadratic forms which can be deduced from Vinberg's algorithm and probably can be applied to narrow down the list of admissible forms. This requires further investigation that lies beyond the scope of the present paper.

Concluding the section, let us take one more look at the non-cocompact case. Our method indicates that quadratic forms which give rise to the arithmetic subgroups of the smallest covolume are the obvious candidates for producing congruence reflection subgroups in higher dimensions. For most $n$ such a quadratic form is $f_1 = -x_0^2+ x_1^2+ \ldots +x_n^2$, however, when $n = 2r-1$ and $r>2$ is even the situation is different: as it was shown in~\cite{E, BE}, the minimal covolume lattices here are associated to quadratic forms $f_3 = -3x_0^2+ x_1^2+ \ldots +x_n^2$. This result was quite unexpected and so it is not surprising that the reflective properties of the quadratic forms $f_3$ were not investigated before (see~\cite{vinb1, VK} for $f_1$ and also $f_2 = -2x_0^2+ x_1^2+ \ldots +x_n^2$). The gap was filled up in a recent work by Mcleod~\cite{Mcleod} who showed that $f_3$ leads to a series of beautiful new examples of hyperbolic reflection groups in dimensions up to $13$.

\section{Discussion}\label{sec:discussion}

\subsection{}\label{sec:disc1} The main open question on the background of this paper is:

\begin{center}
\emph{Are there any non-congruence arithmetic maximal hyperbolic reflection groups?}
\end{center}

Recall that a reflection group $\Gamma$ is maximal if there does not exit a reflection group $\Gamma'\subset\isom$ that properly contains $\Gamma$. It follows that a maximal arithmetic reflection group may be not maximal in the class of all arithmetic lattices, i.e. there may exist an arithmetic group $\Gamma_0\subset\isom$ that properly contains $\Gamma$ (but is not generated by reflections).

It is clear that the maximality assumption in the question is essential because one can produce infinite sequences of arithmetic hyperbolic reflection groups using a variant of doubling procedure~\cite{Allc}, and these groups cannot all be congruence by Theorem~\ref{thm1}. It is known that maximal arithmetic groups are congruence (see~\cite[Lemma 4.7]{ABSW}). As for any arithmetic maximal reflection group $\Gamma$ its normalizer $\Gamma_0$ in $\isom$ will be a maximal arithmetic group, the difference between the two is not too big. It is measured by the symmetry of the orbifold $\OO = \Hy^n/\Gamma$ --- the fact which was used in~\cite{ABSW}.

There are lots of examples of hyperbolic reflection groups in small dimensions while for large $n$ such groups are rare. This makes us think that the key to the solution to the problem lies in small dimensions, with the cases $n=2$ and $3$ looking particularly interesting despite to the fact that for these two dimensions the methods of~\cite{LMR, agol, B1} allow one to avoid this question while dealing with arithmetic reflection groups. It is quite surprising that even for the Fuchsian groups the answer to the question is yet unknown.

In general a somewhat weaker result would suffice for the application of our method: In~\cite{Brooks}, Brooks characterised congruence subgroups as ``short and fat'' and gave a quantitative version of this property. If we could do the same for the arithmetic maximal reflection groups, even weaker quantitative bounds might suffice for the application in spirit of this paper.

\subsection{}
The results of Section~\ref{sec:qaunt} can be applied in combination with Vinberg's algorithm~\cite{vinb1} for classification of the congruence reflection groups. As we already pointed out in Section~\ref{sec:qaunt}, the list of potential candidates in small dimensions is quite large. Therefore, the classification becomes more feasible in higher dimensions, say, when $n\ge 10$.

One of the main features of our method is that it gives an effective upper bound on the covolume of the reflection subgroup, if the latter exists. It would be interesting to investigate if one can use such a bound as a halting condition for Vinberg's algorithm.

\subsection{}
The key inequality \eqref{ly} from Section~\ref{sec:fin_thms} can be extended to other locally symmetric spaces and, in particular, to the quotients of semisimple Lie groups of real rank at least $2$. Here we know that all lattices are arithmetic~\cite[Theorem~1, p.~2]{Margulis} and have property~(T)~\cite{Kaz}. The latter can be used to bound the spectral gap for the corresponding quotient orbifolds (cf.~\cite{Li}). Covolumes of arithmetic subgroups can be effectively bounded from below using results of~\cite{B2, BGLS}. As a corollary we obtain a lower bound for the conformal volume which depends only on the Lie group and grows super-exponentially with its absolute rank. We do not know if a similar bound exists for the real rank one case.

Let us mention in passing that lattices in higher rank orthogonal groups $\PO(m,n)$, $n \ge m \ge 2$, cannot be
reflective in a classical sense: This follows, for example, from the fact that infinite Coxeter groups do not have property~(T)~\cite{BJS} combined with Kazhdan's theorem~\cite{Kaz}. At the same time, it may occur that a lattice in such a group has a finite index subgroup generated by reflections which is not a Coxeter group. This can be seen to take place for the standard arithmetic lattices in $\PO(2,n)$, $n \ge 3$, for which a construction of a subgroup generated by (complex) reflections is described in \cite{vinb4}, and the finiteness of the index of such a reflection subgroup in the corresponding lattice follows by the Margulis normal subgroup theorem \cite[Chapter IV]{Margulis}.


\subsection{} All the questions that are raised above can also be asked for complex hyperbolic arithmetic reflection groups, i.e. arithmetic subgroups of $\PU(n,1)$ generated by complex reflections (cf.~\cite{DM}). It is known how to extend some parts of our technique to this case but there are serious difficulties in extending the whole framework. By now
it is not even clear if the number of conjugacy classes of arithmetic maximal complex reflection groups in any given dimension is finite or not.


\end{document}